\numberwithin{equation}{section}
\newlist{thmlist}{enumerate}{1}
\setlist[thmlist]{label=(\roman{thmlisti}),noitemsep}
\newtheorem{theorem}{Theorem}[section]
\newtheorem*{theorem*}{Theorem}
\newtheorem{proposition}[theorem]{Proposition}
\newtheorem*{claim*}{Claim}
\newtheorem{Main Conjecture}[theorem]{Main Conjecture}
\theoremstyle{definition}
\newtheorem{defn}[theorem]{Definition}
\newenvironment{example}
  {\pushQED{\qed}\examplex}
  {\popQED\endexamplex}
\newtheorem{remark}[theorem]{Remark}
\newtheorem{notation}[theorem]{Notation}
\theoremstyle{plain}
\newcommand{\init}{{\tt in}}
\newcommand{\hgt}{{\rm{ht }}}
\newcommand{\ZZ}{\mathbb{Z}}
\newcommand{\NN}{\mathbb{N}}
\newcommand{\G}{\mathcal{G}}
\newcommand{\ba}{\mathbf{a}}
\newcommand{\bb}{\mathbf{b}}
\newcommand{\bc}{\mathbf{c}}
\newcommand{\bx}{\mathbf{x}}
\newcommand{\cF}{\mathcal F}
\newcommand{\cT}{\mathcal T}
\renewcommand{\P}{\mathcal{P}}
\title[A case study in Gorenstein Liaison]{Multicomplex configurations\\ A case study in  Gorenstein Liaison}
\author[Klein]{Patricia Klein}
\address{Department of Mathematics, Texas A\&M University, College Station, TX, USA}
\email{pjklein@tamu.edu}
\author[Rajchgot]{Jenna Rajchgot}
\address{Department of Mathematics and Statistics, McMaster University, Hamilton, ON, Canada}
\email{rajchgoj@mcmaster.ca}
\author[Seceleanu]{Alexandra Seceleanu}
\address{Mathematics Department, University of Nebraska--Lincoln, Lincoln, NE, USA.}
\email{aseceleanu@unl.edu}
\thanks{Klein was partially supported by NSF DMS-2246962. Rajchgot was partially supported by NSERC Discovery Grants 2017-05732 and 2023-04800. Seceleanu was partially supported by NSF DMS-2401482.}
\date{\today}
\begin{document}
\maketitle

\begin{abstract}
    We introduce and investigate multicomplex configurations, a class of projective varieties constructed via specialization of the polarizations of Artinian monomial ideals. Building upon geometric polarization and geometric vertex decomposition, we establish conditions under which such configurations retain desirable algebraic properties. In particular, we show that, given suitable choices of linear forms for substitution, the resulting ideals admit Gröbner bases with prescribed initial ideals and are in the Gorenstein liaison class of a complete intersection. 
\end{abstract}

\section{Introduction}
The paper \cite{MatroidConfigurations} by Geramita, Harbourne, Migliore and Nagel introduces and develops the concepts of {\em $\lambda$-configurations} and {\em matroid configurations} by considering on one hand unions of all complete intersections of a fixed codimension among a set of given hypersurfaces and on the other ideals obtained from the Stanley-Reisner ideal of a matroid by substitution of the variables with a regular sequence of forms. This allows one to establish close connections between $\lambda$-configurations or matroid configurations and Stanley-Reisner ideals of matroids, leveraging  liaison theory to describe their Hilbert function, minimal generators, and symbolic powers. %A key result is that all symbolic powers of the ideal of any matroid configuration are Cohen-Macaulay, significantly extending prior results known only for specific monomial ideals and point configurations.

This work leaves open the problem of determining the relationship between a Stanley-Reisner ideal or more generally a monomial ideal and the ideal resulting from it by substitution of the variables with arbitrary forms, which do not necessarily form a regular sequence  nor satisfy uniformity conditions.  Specifically, it is not necessarily the case in this paper that any subset of $c+1$ of the forms cuts out a codimension $c+1$ variety, as is the case in the theory of $\lambda$-configurations. 

In this paper we consider geometric configurations obtained by specializing monomial ideals defining Artinian quotient rings. In \cite{M2011}, Murai refers to  such a monomial ideal  as the ideal of a multicomplex. We adopt this terminology here and extend it to introduce the notion of a multicomplex configuration (see \Cref{def:multicomplex}). This refers to projective varieties defined as intersections of reducible hypersurfaces, whose components are chosen from a given collection according to combinatorial rules determined by a multicomplex. A multicomplex configuration can also be viewed as a specialization of the polarization of a multicomplex. Simplical complexes arising from multicomplexes by polarization are called grafted complexes, studied by Faridi in \cite{Fa2005}.

In the generality of \Cref{def:multicomplex}, multicomplex configurations appear to be new, although a subset of these termed distractions have been well-studied as indicated below.

The idea of polarizing a monomial ideal and then specializing the variables of the resulting polarization to linear forms goes back to Hartshorne's proof of the connectedness of the Hilbert scheme in \cite{Har66}. The resulting ideals are called {\em distractions} and are studied further by Bigatti, Conca and Robbiano in \cite{BCR}. Therein conditions are given on the linear forms that ensure that the operation of distraction preserves Betti numbers and results in an arithmetically Cohen-Macaulay variety.

A common feature of the Stanley-Reisner ideals of matroids complexes and of the ideals of multicomplexes  is that they are in the Gorenstein liaison class of a complete intersection (abbreviated {\em glicci}). In the case of matroids this is shown by Nagel and R\"omer in \cite[Theorem 3.3]{NR08} applying the combinatorial technique of vertex decomposition. For the polarizations of multicomplexes the vertex decomposable property is established in \cite[Remark 1.8]{M2011} and the glicci property in \cite[Theorem 3.16]{FKRS}. A careful reading of the proof of \cite[Proposition 2.3]{MatroidConfigurations}, which employs basic double linkage, reveals that matroid configurations are also glicci. 

The current paper focuses on conditions under which distraction of multicomplexes preserves the Gr\"obner basis and the glicci property. This is in contrast with arbitrary unions of linear subspaces of projective space, which can sometimes fail to be Cohen-Macaulay, hence also fail to be glicci. Our main theorem can be abstracted as follows

\begin{theorem*}[\Cref{thm: main}]
Consider a multicomplex $M$, the polynomial ring $S=\kappa[x_1, \ldots, x_m]$, a monomial order $<$ on $S$ refining $x_1>x_2>\cdots>x_m$, and
collections $\cF_1, \ldots, \cF_n$ of linear forms in $S$ such that the initial monomial of each element in $\cF_i$ is $x_i$ and the ideals listed below are pairwise distinct
$$
\left(\ell_{1,b_1}, \ldots, \ell_{n,b_n}\right) \text{ for all }\bb\in\NN^n \text{ such that } \bx^\bb \in M.
$$
Then 
\begin{itemize}
\item the minimal generators of the ideal $I(M)$ of the multicomplex become, upon substituting variables with linear forms from $\cF$, a Gr\"obner basis for the ideal $I(M,\cF)$ of the multicomplex and
\item the ideal $I(M,\cF)$, and hence the corresponding multicomplex configuration, are glicci.
\end{itemize}
\end{theorem*}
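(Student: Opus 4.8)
The plan is to prove both bullet points together, by induction, with the polarization of $M$ as scaffold. Write $\tilde M$ for the polarization of $M$, so that $I(\tilde M)$ is a squarefree monomial ideal in a polynomial ring $R$ with one variable $x_{i,j}$ for each form $\ell_{i,j}$; the associated grafted complex is vertex decomposable \cite{M2011}, hence $I(\tilde M)$ is Cohen--Macaulay and, by \cite[Theorem 3.16]{FKRS}, glicci, while as a monomial ideal it is its own initial ideal. The specialization $x_{i,j}\mapsto\ell_{i,j}$ sends the monomial generators of $I(\tilde M)$ to the substituted minimal generators of $I(M)$, so $I(M,\cF)$ is the image ideal; and since each $\ell_{i,j}$ has initial monomial $x_i$, the image of a minimal generator $\bx^\bb$ of $I(M)$ has initial term $\bx^\bb$ under $<$, giving $\init_< I(M,\cF)\supseteq I(M)S$ for free. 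The Gröbner basis assertion is the reverse inclusion, equivalently (by homogeneity) $\mathrm{HF}(S/I(M,\cF))=\mathrm{HF}(S/I(M)S)$, and the glicci assertion is that the specialization transports the glicci property of $I(\tilde M)$ — both of which I would extract from the single statement that this specialization is \emph{geometric} in the sense of geometric polarization.

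Concretely, after the triangular change of coordinates sending $x_i\mapsto\ell_{i,0}$ — legitimate because $\ell_{i,0}=x_i+(\text{form in the smaller variables})$, so it respects $<$ and preserves the leading-term hypothesis on the remaining $\ell_{i,j}$ — one may assume $\ell_{i,0}=x_i$, so that $I(M,\cF)$ is the image of $I(\tilde M)$ under the ring map killing the linear forms $\theta_{i,j}:=x_{i,j}-\ell_{i,j}$ ($j\ge 1$); thus $S/I(M,\cF)$ is $R/I(\tilde M)$ modulo the ideal they generate. The heart of the matter is that, under the distinctness hypothesis, the $\theta_{i,j}$ form a regular sequence on $R/I(\tilde M)$: the minimal primes of $I(\tilde M)$ are monomial, and the distinctness hypothesis — imposed over all $\bx^\bb\in M$ — is worded precisely so as to control how they degenerate, the point being that each partial quotient $R/(I(\tilde M)+(\theta_{i,j}:(i,j)\in T))$ has no associated prime containing the next $\theta$, the relevant non-containments being exactly the pairwise distinctness of the linear ideals $(\ell_{1,b_1},\dots,\ell_{n,b_n})$. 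Granting this, I would then run the induction via a geometric vertex decomposition of $I(M,\cF)$ with respect to the top variable $x_1$, realizing the multicomplex recursion $I(M)=x_1\,I(M\!:\!x_1)+I(M|_{x_1=0})$ whose contraction and deletion pieces are the multicomplex configurations of the link $M\!:\!x_1$ (with $\cF_1$ reindexed) and the deletion $M|_{x_1=0}$ (with $\cF_2,\dots,\cF_n$); the distinctness hypothesis restricts to each smaller datum, so the inductive hypothesis gives, for each piece, the Gröbner basis with initial ideal the corresponding monomial ideal extended to $S$ and the glicci property, and the two structural outputs of a geometric vertex decomposition — propagation of the Gröbner basis property, with $\init_< I(M,\cF)=x_1\,\init_<(\text{contraction})+\init_<(\text{deletion})=I(M)S$, and propagation of the glicci property via basic double linkage — close the induction. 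The base case is a rectangular multicomplex: there $I(M)=(x_1^{a_1+1},\dots,x_n^{a_n+1})$, and any $n$ of the forms $\ell_{i,j}$, one per block, have the distinct leading terms $x_1,\dots,x_n$ and so are linearly independent, whence $I(M,\cF)$ is a genuine complete intersection — glicci, with its defining products trivially a Gröbner basis and initial ideal $I(M)$. The two bullet points must be carried jointly, since identifying the contraction and deletion pieces of a geometric vertex decomposition presupposes the Gröbner basis property at that stage and the liaison is performed on the same filtration.

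The main obstacle is the regular-sequence fact above: showing that the pairwise distinctness of the linear ideals $(\ell_{1,b_1},\dots,\ell_{n,b_n})$ over $\bx^\bb\in M$ forces the specialization to be geometric — i.e.\ flat, with limiting scheme the monomial polarization on the nose and no embedded components introduced — and, equivalently, that the scheme-theoretic hypotheses of each geometric vertex decomposition in the induction hold. Because the $\ell_{i,j}$ need neither form a regular sequence nor satisfy the uniformity built into $\lambda$-configurations, neither the matroid-configuration methods of \cite{MatroidConfigurations} nor the distraction criteria of \cite{BCR} apply, so one must analyze the associated primes of the successive partial specializations directly from the combinatorics of $I(\tilde M)$ and the distinctness condition, and separately verify that distinctness descends to links and deletions. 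A secondary, purely bookkeeping obstacle is arranging the coordinate changes and reindexings so that one monomial order refining $x_1>\cdots>x_m$ and the leading-term hypothesis on the families persist throughout the recursion.
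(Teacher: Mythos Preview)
Your plan has the logic reversed relative to the paper, and this creates the very obstacle you flag as unresolved.

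You propose to first prove that the $\theta_{i,j}=x_{i,j}-\ell_{i,j}$ form a regular sequence on $R/I(\tilde M)$, then deduce the Gr\"obner basis and glicci statements. You correctly identify this regular-sequence claim as the ``main obstacle'' and say one must analyze associated primes of the partial specializations directly --- but you do not carry this out, and it is not clear your distinctness hypothesis, which only constrains the \emph{fully} specialized linear ideals, gives enough control over the intermediate quotients. The paper avoids this entirely by going in the opposite direction. It proves the Gr\"obner basis statement first, by an elementary degree count: from $I(M)S\subseteq\init_< I(M,\cF)$ and $I(M,\cF)\subseteq J:=\bigcap_{\bx^\bb\in M}(\ell_{1,b_1},\dots,\ell_{n,b_n})$ one gets
\[
\deg(S/I(M)S)\ \ge\ \deg(S/\init_< I(M,\cF))\ =\ \deg(S/I(M,\cF))\ \ge\ \deg(S/J)\ =\ |M|\ =\ \deg(S/I(M)S),
\]
where the distinctness hypothesis is used exactly once, to count $\deg(S/J)=|M|$. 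Equality throughout forces $\init_< I(M,\cF)=I(M)S$. Regularity is then a \emph{consequence}: applying the already-proved Gr\"obner basis statement to the one-step polarized configuration $(\mathcal P_{x_1}(\mathcal G))=I(M,\mathcal T)$ in $S[x_1']$ and invoking \cref{thm:inducedGB} shows $x_1-x_1'$ is a nonzerodivisor, which is the only regularity needed to run \cref{thm:polarLink} and induct.

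There is a second, related issue. You speak of a geometric vertex decomposition of $I(M,\cF)$ with respect to $x_1$ realizing $I(M)=x_1\,I(M{:}x_1)+I(M|_{x_1=0})$. But the generators $f(\ba)$ have $x_1$-degree $a_1$, not $\le 1$, so \cref{prop:GB linear in y gives gvd} does not apply to $I(M,\cF)$ directly; $\init_{x_1} I(M,\cF)$ is not of the form $x_1 C+N$ for ideals $C,N$ not involving $x_1$. The paper's fix is precisely the one-step geometric polarization $\mathcal P_{x_1}(\mathcal G)$, which is linear in $x_1$ by construction; the GVD and the biliaison take place in $S[x_1']$, with link $D$ isomorphic to $I(M'',\cF'')$ for $I(M'')=I(M){:}x_1$. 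Your outline gestures at polarization as ``scaffold'' but then tries to run the recursion in $S$ itself, which does not work without this intermediate step.
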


To prove this theorem we leverage ideas from works on geometric polarization and geometric vertex decomposition.
 In~\cite{KMY09} Knutson, Miller and Yong introduced  geometric vertex decomposition to study ideals in a polynomial ring and corresponding algebraic varieties. Instead of using Gr\"obner degeneration to pass directly to a monomial ideal, which can lead to a complicated degeneration which forgets too much of the indexing combinatorics too quickly, a step-by-step degeneration process is considered. 
 
%\PJK{I changed this part a little bit, mostly because the limits KMY consider all do happen to be reduced, so the issue for them was losing track of the indexing combinatorics if they went straight to the initial ideal rather than a failure of reducedness.}
 
 Given a reduced variety $X$, the process of geometric vertex decomposition involves rescaling one coordinate axis at a time, breaking $X$ into projection and cone parts analogous to the link and deletion in a vertex decomposition. If the degeneration remains reduced, key invariants like multidegrees and Hilbert series can be computed separately for these parts and later combined. By iterating this procedure across all coordinate axes, one can eventually reach the full Gr\"obner degeneration while retaining inductive insights. When the final result consists of coordinate subspaces (equivalently, is defined by a squarefree monomial ideal), this process aligns with the classical vertex decomposition in simplicial complexes.

%\PJK{I really like your description of geometric vertex decomposition here!}

Generalizing the relationship between vertex decomposition of monomial ideals and Gorenstein liaison presented in  \cite{NR08},  Klein and Rajchgot established a  connection between Gorenstein  liaison and geometric vertex decomposition in \cite{KR21}. Building upon the classical operation on monomial ideals termed polarization, geometric polarization (\Cref{def:geoPol}) was introduced by the authors in \cite{FKRS} as a means to 
apply the technique of geometric vertex decomposition in conjunction with Gorenstein biliaison in new contexts. This technique, detailed in \cref{s: geom pol} is what allows us to control the outcome of  substituting collections of linear forms with carefully chosen leading terms into the ideal of a multicomplex. Thus we view our work as a case study in Gorenstein liaison and an invitation to discovering new contexts where this technique can be brought to bear and new properties of multicomplex configurations.

\section{Multicomplex Configurations}

Inspired by \cite{MatroidConfigurations} and \cite{M2011}, in this work we introduce the notion of a multicomplex configuration. Throughout this paper, let $\kappa$ denote an arbitrary field.  We work over a pair of polynomial rings $R=\kappa[x_1,\ldots, x_n]$ and $S=\kappa[x_1,\ldots, x_m]$ with $m\geq n$, which are related by the inclusion $R\subseteq S$. 

\begin{defn}[\cite{M2011}]
Given a vector $\bc\in \NN^n$, a {\em $\bc$-monomial} is a monomial $x_1^{a_1}x_1^{a_2}\cdots x_n^{a_n}$ such that $a_i\leq c_i$ for $1\leq i\leq n$. 
A {\em $\bc$-multicomplex $M$} is a non-empty, finite set of {\em $\bc$-monomials}  such that if $u \in M$ and a monomial $v$ divides $u$ then $v \in M$. The {\em ideal $I(M)$ of a $\bc$-multicomplex $M$} is the ideal generated by the monomials which do not belong to $M$. 
\end{defn}

By definition a $\bc$-multicomplex $M$ is a finite down-set in the partially ordered set of monomials in $R=\kappa[x_1,\ldots, x_n]$ with respect to divisibility. It forms a vector space basis for the quotient ring $R/I(M)$, therefore demonstrating that this ring is indeed Artinian. The multicomplex $M$ consists of the standard monomials of this quotient ring.  Conversely, every monomial ideal defining an Artinian quotient corresponds to a  $\bc$-multicomplex $M$ for infinitely many vectors $\bc$.

\begin{defn}\label{def:multicomplex}
Given a vector $\bc\in \NN^n$, a $\bc$-multicomplex $M$, and $n$ ordered lists of homogeneous polynomials $\cF_i=\{f_{i,j}\in S=\kappa[x_1,\ldots, x_m] :  0\leq j\leq c_i\}$ with $1\leq i\leq n$, let $\cF = \bigcup_{1 \leq i \leq n} \cF_i$.  We define the  {\em multicomplex configuration} determined by the given data to be the vanishing locus in projective space of the ideal 
\begin{equation}\label{eq: multicomplex}
I(M,\cF)=
\left( \prod_{i=1}^n\prod_{j=0}^{a_{i}-1}f_{i,j}:  \prod_{i=1}^n x_i ^{a_i}\in G(I(M)) \right),
\end{equation}
where $G(I(M))$ denotes the (unique) minimal monomial set of generators of $I(M)$.
\end{defn}

\begin{remark}
    If $g=\prod_{i=1}^nx_i ^{a_i}$ is a minimal monomial generator of $I(M)$ it follows that for all  indices $i$ we have $g/x_i\in M$ and thus $a_i-1\leq c_i$ for all $i$. This justifies that the formula in \eqref{eq: multicomplex} is well-defined, that is, $f_{i,j}\in\cF_i$ for all $0\leq j\leq a_i-1$.
\end{remark}

Another way to construct $I(M,\cF)$ in two steps is to first polarize the monomial ideal $I(M)$, obtaining a squarefree monomial ideal $I(M)^{\rm pol}$ of $R^{\rm pol}=\kappa[x_{i,j}: 1\leq i\leq n, 1\leq j\leq c_i]$ and then define $I(M,\cF)$ as the image of $I(M)^{\rm pol}$ via the ring homomorphism $R^{\rm pol}\to S$ determined by $x_{i,j}\mapsto f_{i,j}$. In this manner, multicomplex configurations fit in the paradigm of specializing Stanley-Reisner ideals described in the introduction.

\begin{example}\label{ex-not-lambda-or-matroid}
Consider the $(2,1)$-multicomplex $M=\{1, x_1, x_1^2, x_2\}$ with corresponding ideal $I(M)= (x_1^3, x_1x_2, x_2^2)\subseteq k[x_1,x_2]$. We construct a multicomplex configuration utilizing the linear forms $\cF_1=\{x_1, x_1+x_2+3x_3, x_1+x_3\}$ and $\cF_2=\{x_2+x_3,x_2\}$ in $S=k[x_1,x_2, x_3]$. This results in 
\[
I(M,\cF)=(x_1(x_1+x_2+3x_3)(x_1+x_3), x_1(x_2+x_3),(x_2+x_3)x_2).
\]
The primary decomposition of the above ideal is
\[
I(M,\cF)=(x_1, x_2+x_3)\cap (x_1+x_2+3x_3, x_2+x_3)\cap (x_1+x_3, x_2+x_3)\cap (x_1,x_2),
\]
which reveals that our multicomplex configuration is a set of four points in $\mathbb{P}^2$. Since a $\lambda$-configuration in $\mathbb{P}^2$ would consist of a binomial number $\binom{r}{2}$ of points for some $r>0$, this is not a $\lambda$-configuration.

Moreover, the polarized ideal $I(M)^{\rm pol}$ is not the Stanley-Reisner ideal of a matroid since it can be verified computationally that its second symbolic power is not Cohen-Macaulay, while symbolic powers of Stanley-Reisner ideals of matroids are always Cohen-Macaulay by \cite{Varbaro}, \cite{MinhTrung}.
\end{example}

In \cite[Remark 1.8 and Lemma 3.2]{M2011} Murai shows that, for any multicomplex $M$, the ideal $I(M)^{\rm pol}$ is vertex decomposable, from which it follows via \cite[Theorem 3.3]{NR08} that the same ideal is glicci.  Under suitable conditions on $\cF$, we wish to extend these properties to the multicomplex configuration ideals $I(M,\cF)$, replacing vertex decomposability by geometric vertex decomposability, a notion we recall below.

\section{Geometric polarization and Gorenstein liaison}
\label{s: geom pol}

The following sections make consistent use of term orders, initial ideals, and Gr\"obner bases.  For a review of standard facts and terminology, we refer  to \cite[Chapter 15]{Eis95}.

\subsection{Geometric polarization}

Geometric polarization was introduced in 
\cite{FKRS} with the purpose of extending the polarization operation from monomial ideals to arbitrary ideals of a polynomial ring.

\begin{defn}\label{def:geoPol}
Fix a variable $y = x_j$ of $S = \kappa[x_1, \ldots, x_m]$.  For $g \in S$, write $g=\sum_{i=0}^t y^ir_i$, $r_i \in S$, where $r_t \neq 0$ and $y$ does not divide any term of any $r_i$.  Using a new variable $y'$, define $$\mathcal{P}_y(g) = r_0+yr_1+\sum_{i=2}^t y(y')^{i-1}r_i \in S[y'].$$ For an ideal $I$, term order $<$, and $<$-Gr\"obner basis $\mathcal{G}$, define $\mathcal{P}_y(\mathcal{G})=\{\mathcal{P}_y(g) \colon g \in \mathcal{G}\}$.  We call $\mathcal{P}_y(\mathcal{G})$ the \bf{one-step geometric polarization of $\mathcal{G}$ with respect to $y$}. 
\end{defn}

If $I$ is a monomial ideal and $\mathcal{G}$ is its set of minimal monomial generators, a sequence of one-step polarizations ultimately yields the minimal monomial generators for the polarization of $I$ in the usual sense.  Like in the monomial case, we have that $(\mathcal{G}, y-y') = (\mathcal{P}_y(\mathcal{G}), y-y')$. If $\mathcal{G}$ consists of monomials, then $y-y'$ is a regular element on $R/(\mathcal{G})$; however, this condition no longer necessarily holds if $\mathcal{G}$ consists of arbitrary polynomials. In \Cref{thm:inducedGB} we review how this condition relates to $\mathcal{P}_y(\mathcal{G})$ being a Gr\"obner basis under term orders that are compatible with geometric vertex decomposition, a notion we now review.

\subsection{Geometric vertex decomposition}

Knutson, Miller, and Yong \cite{KMY09} introduced geometric vertex decomposition in their study of vexillary matrix Schubert varieties, otherwise known as one-sided mixed ladder determinantal ideals.

\begin{defn}\label{def:ycomp}
Suppose that $S = \kappa[x_1, \ldots, x_m]$ is equipped with a term order $<$.  Fix $j \in [n]$, and set $y = x_j$. For a polynomial $f = \sum_{i=0}^t \alpha_i y^i \in S$, where $\alpha_t \neq 0$ and no term of any $\alpha_i$ is divisible by $y$, write $\init_{y} (f) = \alpha_t y^t$.  If $\init_<(f) = \init_<(\init_{y}(f))$ for all $f \in S$, we say that $<$ is a \textbf{$y$-compatible term order}.  

For an ideal $I$ of $S$, write $\init_{y}(I) = (\init_{y}(f) \colon f \in I)$.  If there exists a $<$-Gr\"obner basis $\mathcal{G}$ of $I$ such that $\init_<(g) = \init_<(\init_{y}(g))$ for all $g \in \mathcal{G}$, we call $<$ \textbf{$y$-compatible with respect to $I$}.   If $<$ is $y$-compatible with respect to $I$, then it follows from \cref{def:ycomp} that $\init_<(I) = \init_<(\init_{y}(I))$.
\end{defn}

For $0 \neq f \in S$, let $\deg_y(f)$ denote the greatest power of $y$ that divides at least one term of $f$. In particular, $\deg_y(f) = 0$ if $f\neq 0$ and no term of $f$ is divisible by $y$.  Define $\deg_y(0) = -\infty$.

\begin{notation}\label{not:indOrder} Let $S = \kappa[x_1, \ldots, x_m]$ and $S' = S[y']$.  Fix a term order $<$ on $S$.  If $\mu$ is a monomial in $S'$, let depol$(\mu)$ denote the monomial of $S$ obtained by replacing in $\mu$ each $y'$ with a $y$. Define a term order $\prec$ on $S'$ in the following way: $\mu \prec \nu$ if depol$(\mu)<$depol$(\nu)$ or if depol$(\mu)=$depol$(\nu)$ and $\deg_{y}(\mu)<\deg_y(\nu)$.
\end{notation}

The following theorem elucidates the circumstances under which the geometric polarization of a Gr\"obner basis is a Gr\"obner basis.

\begin{theorem}[{\cite[Theorem 5.7]{FKRS}}]
\label{thm:inducedGB}
    Fix a homogeneous ideal $I$ of $S = \kappa[x_1, \ldots, x_m]$, a term order $<$ that is $y$-compatible with respect to $I$, and a $<$-Gr\"obner basis $\mathcal{G}$ of $I$.  Then $\mathcal{P}_y(\mathcal{G})$ is a $\prec$-Gr\"obner basis of $(\mathcal{P}_y(\mathcal{G}))$ if and only if $y-y'$ is a nonzerodivisor on $S[y']/(\mathcal{P}_y(\mathcal{G}))$.
    %Furthermore, if $\mathcal{G}$ is a reduced Gr\"obner basis and $\mathcal{P}_y(\mathcal{G})$ a Gr\"obner basis, then it is a reduced as well.
\end{theorem}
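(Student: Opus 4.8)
The plan is to work inside $S' = S[y']$ (graded with $\deg y' = \deg y$) and to compare the ideal $J := (\mathcal{P}_y(\mathcal{G}))$ with the monomial ideal $J_0$ generated by the $\prec$-leading terms of the polynomials $\mathcal{P}_y(g)$, keeping careful track of Hilbert series and of the quotients by $z := y - y'$. We assume $\mathcal{G}$ consists of homogeneous polynomials, so that $J$ and $J_0$ are homogeneous. The first task is to identify the leading terms. For $g = \sum_{i=0}^{t} y^i r_i \in \mathcal{G}$ written as in \Cref{def:geoPol}, expanding $\mathcal{P}_y(g)$ shows that $\mathrm{depol}$ of \Cref{not:indOrder} carries the monomials of $\mathcal{P}_y(g)$ bijectively onto the monomials of $g$, compatibly with the decomposition into summands $y^i r_i$ (whose $y$-degrees $0, 1, \dots, t$ are pairwise distinct, so there is no interference). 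Because $\prec$ compares monomials first via $\mathrm{depol}$ (relative to $<$) and only then breaks ties by $\deg_y$, the $\prec$-maximal monomial of $\mathcal{P}_y(g)$ is the unique one whose depolarization is $\init_<(g)$; reading off which summand it comes from identifies it as $\mathcal{P}_y(\init_<(g))$. Hence $\init_\prec(\mathcal{P}_y(g)) = \mathcal{P}_y(\init_<(g))$ for every $g \in \mathcal{G}$; the $y$-compatibility of $<$ with respect to $I$ is the standing hypothesis under which this geometric-polarization framework operates.

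Put $J_0 := \bigl(\mathcal{P}_y(\init_<(g)) : g \in \mathcal{G}\bigr)$. By the previous paragraph $J_0 \subseteq \init_\prec(J)$, and $\mathcal{P}_y(\mathcal{G})$ is a $\prec$-Gröbner basis of $J$ exactly when $J_0 = \init_\prec(J)$; since $J_0 \subseteq \init_\prec(J)$ are homogeneous and $\init_\prec(J)$ has the same Hilbert series as $J$, this holds if and only if the coefficientwise-nonnegative series $A := H_{S'/J_0} - H_{S'/J}$ vanishes. On the other hand, a routine computation with polarizations of monomial ideals identifies $J_0$ with the one-step $y$-polarization of the monomial ideal $\init_<(I) = \bigl(\init_<(g) : g \in \mathcal{G}\bigr)$ (this equality holding because $\mathcal{G}$ is a Gröbner basis); in particular $z = y - y'$ is a nonzerodivisor on $S'/J_0$, which is the monomial base case of geometric polarization recorded after \Cref{def:geoPol}.

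Next, since $\mathcal{P}_y(h) \equiv h \pmod{z}$ for every polynomial $h$, the isomorphism $S'/(z) \cong S$ sending $y' \mapsto y$ carries $(J, z)$ onto $I$ and $(J_0, z)$ onto $\init_<(I)$; as $\mathcal{G}$ is a $<$-Gröbner basis of $I$, these two quotients have equal Hilbert series, so $H_{S'/(J,z)} = H_{S'/(J_0,z)}$. Finally, for any homogeneous ideal $K \subseteq S'$, the exact sequence
\[
0 \longrightarrow \bigl((K:z)/K\bigr)(-1) \longrightarrow (S'/K)(-1) \xrightarrow{\;\cdot z\;} S'/K \longrightarrow S'/(K,z) \longrightarrow 0
\]
gives $H_{S'/(K,z)} = (1-t)\,H_{S'/K} + t\,H_{(K:z)/K}$. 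Applying this with $K = J_0$ (where $(J_0 : z) = J_0$) and with $K = J$, subtracting, and invoking $H_{S'/(J,z)} = H_{S'/(J_0,z)}$, yields $(1-t)\,A = t\,B$, where $B := H_{(J:z)/J} \ge 0$. Because $1-t$ is a unit in $\ZZ[[t]]$, this forces $A = 0$ if and only if $B = 0$; and $A = 0$ means precisely that $\mathcal{P}_y(\mathcal{G})$ is a $\prec$-Gröbner basis, while $B = 0$ means precisely that $z = y - y'$ is a nonzerodivisor on $S'/J = S[y']/(\mathcal{P}_y(\mathcal{G}))$. This is the claimed equivalence.

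The step I expect to be most delicate is the first one: verifying that $\mathrm{depol}$ is a bijection on the monomials of $\mathcal{P}_y(g)$ and correctly pinpointing the $\prec$-maximal monomial, since $\prec$ interlaces the given order $<$ with the auxiliary $y$-degree. A smaller technical point is the identification of $J_0$ with a genuine polarization of $\init_<(I)$, which rests on the elementary-but-fiddly fact that the polarization of a monomial ideal does not depend on the chosen monomial generating set.
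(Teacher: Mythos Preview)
The paper does not supply a proof of \Cref{thm:inducedGB}; the result is imported from \cite[Theorem 5.7]{FKRS} and used as a black box, with only the remark that the original proof requires merely $y$-compatibility with respect to $I$ rather than full $y$-compatibility. So there is nothing in this paper to compare your argument against directly.

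That said, your proof is correct. The step you flagged as most delicate---that $\mathrm{depol}$ bijects the monomials of $\mathcal{P}_y(g)$ onto those of $g$---goes through because the summands $r_0,\, yr_1,\, y y' r_2,\, y(y')^2 r_3,\ldots$ of $\mathcal{P}_y(g)$ have pairwise distinct $(\deg_y,\deg_{y'})$-bidegrees $(0,0),(1,0),(1,1),(1,2),\ldots$, so no collisions occur. Your identification of $J_0$ with the one-step $y$-polarization of $\init_<(I)$ is likewise sound: since $\mu \mid \nu$ implies $\mathcal{P}_y(\mu)\mid \mathcal{P}_y(\nu)$ for monomials, the ideal $(\mathcal{P}_y(\mu):\mu\in T)$ is the same for any monomial generating set $T$ of $\init_<(I)$, and the regularity of $y-y'$ on $S'/J_0$ is then the monomial base case recorded after \Cref{def:geoPol}. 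The Hilbert-series identity $(1-t)A = tB$ with $A,B\geq 0$ coefficientwise then yields the equivalence cleanly.

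One incidental observation: your argument never actually invokes the $y$-compatibility hypothesis---the identification $\init_\prec(\mathcal{P}_y(g)) = \mathcal{P}_y(\init_<(g))$ holds for any term order $<$ once you know $\mathrm{depol}$ is a bijection on the relevant monomials---so you have in effect proved the equivalence under weaker assumptions than stated.
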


Although \cite[Theorem 5.7]{FKRS} is stated for a $y$-compatible term order $<$, the proof only requires that $<$ be $y$-compatible with respect to $I$.

%\PJK{I moved this down so that $y$-compatible would be defined before this theorem is stated.}

We now arrive to the notion of geometric vertex decomposition, following the presentation in \cite[Section 2.1]{KMY09} and \cite[Section 2]{KR21}. 

%\PJK{I added the self citation because KMY doesn't talk about degenerate or non-degenerate and may or may not want to be saddled with it (and may or may not approve of the notation that Jenna and I chose for a lot of this).}

\begin{proposition}[{\cite[Theorem 2.1]{KMY09}}]\label{prop:GB linear in y gives gvd}
Fix an ideal $I$ and a term order $<$ that is $y$-compatible with respect to $I$. Suppose that $I$ has  a Gr\"obner basis of  the form 
\[
\mathcal{G} = \{yq_1+r_1, \dots, yq_k+r_k, h_1, \ldots, h_\ell\},
\]
where no term of any $q_i$, $r_i$, or $h_i$ is divisible by $y$ and define 
\begin{eqnarray*}
I' &=& (yq_1, \dots, yq_k, h_1, \ldots, h_\ell), \\
    C_{y,I} &=& (q_1, \dots, q_k, h_1, \ldots, h_\ell), \\
    N_{y,I} &=& (h_1, \ldots, h_\ell).
\end{eqnarray*}
%\begin{eqnarray*}
 %   C_{y,I} &=& \bigcup_{i \geq 1} (\init_{y} (I): y^i) \\
 %   N_{y,I} &=& (I \cap \kappa[x_1, \ldots, \hat{y}, \ldots , x_n])R.
%\end{eqnarray*}
 %Note that $N_{y,I}+(y) = \init_{y}(I)+(y)$ and that $C_{y,I}$ and $N_{y,I}$ each have a generating set that does not involve $y$.  
 Then 
 \begin{enumerate}
     \item  $I'=\init_{y}(I)$ and the given generating sets of $I'$, $C_{y,I}$, and $N_{y,I}$ are $<$-Gr\"obner bases.
 \item
$\init_{y}(I) = yC_{y,I}+N_{y,I} = C_{y,I} \cap (N_{y,I}+(y))$.
\end{enumerate}
\end{proposition}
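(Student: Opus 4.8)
The claim has two parts. Part (1) asserts that the displayed generating sets of $I' = \init_y(I)$, $C_{y,I}$, and $N_{y,I}$ are Gröbner bases (and that $I' = \init_y(I)$); part (2) gives the two decompositions of $\init_y(I)$. Let me sketch how I would organize the argument.

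The plan is to start from the hypothesis that $\mathcal{G} = \{yq_1 + r_1, \dots, yq_k + r_k, h_1, \dots, h_\ell\}$ is a $<$-Gröbner basis with $<$ being $y$-compatible with respect to $I$, so that $\init_<(yq_i + r_i) = \init_<(y q_i) = y \init_<(q_i)$ and $\init_<(h_j) = \init_<(\init_y(h_j)) = \init_<(h_j)$ (the $h_j$ already have $y$-degree $0$).

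First, I would verify $I' = \init_y(I)$. The inclusion $I' \subseteq \init_y(I)$ is immediate since $\init_y(yq_i + r_i) = yq_i$ (assuming each $q_i \neq 0$; the case $q_i = 0$ is degenerate and those generators contribute nothing new) and $\init_y(h_j) = h_j$. For the reverse inclusion, I would invoke the general principle that $\init_y(I)$ is generated by $\{\init_y(g) : g \in \mathcal{G}'\}$ for any Gröbner basis $\mathcal{G}'$ of $I$ with respect to a $y$-compatible order — this is exactly the content of the last sentence of \Cref{def:ycomp} applied at the level of $y$-leading terms, or can be re-derived by a standard division/leading-term argument. Then I would show $\{yq_1, \dots, yq_k, h_1, \dots, h_\ell\}$ is itself a $<$-Gröbner basis of $I'$: since $\init_<(yq_i) = \init_<(yq_i + r_i)$ and $\init_<(h_j) = \init_<(h_j)$, the set of leading terms of these generators equals the set of leading terms of $\mathcal{G}$, which generates $\init_<(I) = \init_<(\init_y(I)) = \init_<(I')$; hence the generators of $I'$ have leading terms generating $\init_<(I')$, which is the Gröbner basis criterion. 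The analogous statement for $N_{y,I} = (h_1, \dots, h_\ell)$ follows because the $h_j$ are precisely the elements of $\mathcal{G}$ of $y$-degree $0$, and one checks (via S-polynomials, noting that $S(yq_i+r_i, h_j)$ and $S(h_i,h_j)$ reduce appropriately, with the $y$-homogeneity in the $y$-grading keeping the bookkeeping clean) that they form a Gröbner basis of the ideal they generate; similarly for $C_{y,I} = (q_1, \dots, q_k, h_1, \dots, h_\ell)$, using that dividing the relation $yq_i + r_i \in I$ by the structure of a $y$-compatible order lets one extract that the $q_i$ together with the $h_j$ are a Gröbner basis of $C_{y,I}$.

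For part (2), the equality $\init_y(I) = yC_{y,I} + N_{y,I}$ is a direct rewriting: $yC_{y,I} + N_{y,I} = (yq_1, \dots, yq_k, yh_1, \dots, yh_\ell, h_1, \dots, h_\ell) = (yq_1, \dots, yq_k, h_1, \dots, h_\ell) = I'$. The more substantive claim is $yC_{y,I} + N_{y,I} = C_{y,I} \cap (N_{y,I} + (y))$. The inclusion $\subseteq$ is clear since $yC_{y,I} \subseteq C_{y,I} \cap (y) \subseteq C_{y,I}$ and $yC_{y,I} \subseteq (y) \subseteq N_{y,I}+(y)$, while $N_{y,I} \subseteq C_{y,I}$ and $N_{y,I} \subseteq N_{y,I}+(y)$. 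For $\supseteq$, I would take $f \in C_{y,I} \cap (N_{y,I}+(y))$, write $f = \sum a_i q_i + \sum b_j h_j$ and also $f = p + yg$ with $p \in N_{y,I}$, then reduce modulo $(y)$: working in $S/(y) = \kappa[\text{other variables}]$, one gets $\bar f = \sum \bar a_i(0) \bar q_i + \sum \bar b_j(0) \bar h_j$ on one side and $\bar f = \bar p$ on the other, and since $p \in N_{y,I}$ its image lies in the image of $N_{y,I}$; the key point is then that an element of $C_{y,I}$ lying modulo $(y)$ inside (the image of) $N_{y,I}$ can be adjusted by an element of $N_{y,I}$ to become divisible by $y$, after which divisibility by $y$ combined with membership in $C_{y,I}$ forces membership in $yC_{y,I}$. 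This last implication — that $f \in C_{y,I}$ and $y \mid f$ imply $f \in yC_{y,I}$ — is where a Gröbner-basis argument (using that $\{q_i, h_j\}$ is a Gröbner basis of $C_{y,I}$ and that the $q_i, h_j$ have no term divisible by $y$, so that a standard division of $f$ by them produces a remainder $0$ with all quotients divisible by $y$) does the real work.

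I expect the main obstacle to be the equality $C_{y,I} \cap (N_{y,I}+(y)) = yC_{y,I}+N_{y,I}$, specifically the containment $\supseteq$: it is not purely formal and genuinely uses that $\{q_1, \dots, q_k, h_1, \dots, h_\ell\}$ is a Gröbner basis (not merely a generating set) of $C_{y,I}$ with generators having no term divisible by $y$, so that "membership in $C_{y,I}$ plus divisibility by $y$" can be promoted to "membership in $yC_{y,I}$." Everything else is careful bookkeeping with $y$-compatible leading terms and S-polynomial reductions, which I would streamline by exploiting the $\ZZ$-grading by powers of $y$ wherever possible.
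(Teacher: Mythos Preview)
The paper does not prove this proposition: it is quoted verbatim as \cite[Theorem 2.1]{KMY09} and used as a black box, so there is no ``paper's own proof'' to compare against. Your sketch is therefore being assessed on its own terms.

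Your outline is essentially correct and follows the standard route. A few remarks on execution:
\begin{itemize}
\item Your argument that $\{yq_i,h_j\}$ is a $<$-Gr\"obner basis of $I'$ via $\init_<(I')=\init_<(\init_y(I))=\init_<(I)$ is clean and is the right way to do it.
\item The Gr\"obner basis claims for $N_{y,I}$ and $C_{y,I}$ are where you are most hand-wavy. Saying ``one checks via S-polynomials'' is not quite enough, but the fix is exactly the $\ZZ$-grading by $y$-degree you mention at the end: any $f\in N_{y,I}$ decomposes as $f=\sum_s y^s f_s$ with each $f_s\in N_{y,I}$ not involving $y$ (because the $h_j$ do not involve $y$); then $\init_<(f)=y^s\init_<(f_s)$ for some $s$, and $\init_<(f_s)$ is not divisible by $y$ but lies in $\init_<(I)=(y\init_<(q_i),\init_<(h_j))$, forcing $\init_<(f_s)\in(\init_<(h_j))$. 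The argument for $C_{y,I}$ is analogous after multiplying $f_s$ by $y$ to land in $I'$. This is cleaner than S-polynomial bookkeeping and works under the weaker hypothesis that $<$ is $y$-compatible only with respect to $I$.
\item For the inclusion $C_{y,I}\cap(N_{y,I}+(y))\subseteq yC_{y,I}+N_{y,I}$, your reduction to ``$y\mid g$ and $g\in C_{y,I}$ imply $g\in yC_{y,I}$'' via the division algorithm (using that no leading term of $q_i$ or $h_j$ involves $y$) is exactly right and is indeed the substantive step.
\end{itemize}
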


\begin{defn}
    In the setup described above, we call the equation $\init_{y}(I) = C_{y,I} \cap (N_{y,I}+(y))$ a \textbf{geometric vertex decomposition of $I$ with respect to $y$.}  We call a geometric vertex decomposition \textbf{nondegenerate} if $C_{y,I} \neq (1)$ and $\sqrt{C_{y,I}} \neq \sqrt{N_{y,I}}$. In this context the {\bf geometric link} of $I$ is  $C_{y,I}$ and the {\bf geometric deletion} of $I$ is $N_{y,I}$.
\end{defn}

This is in analogy with the notion of vertex decomposition for simplicial complexes. Algebraically, vertex decomposition translates into decomposing a monomial ideal in terms of the ideals of the link and deletion of a vertex similarly to the formulas in item (2) of \Cref{prop:GB linear in y gives gvd}.

%The ability to iteratively apply geometric vertex decompositions leads to the idea of geometrically vertex decomposable ideals.

%\begin{defn}\label{def:gvd}
%An ideal $I\subseteq S = \kappa[x_1, \ldots, x_m]$ is \textbf{geometrically vertex decomposable} if $I$ is unmixed and if
%\begin{enumerate}
%\item $I = ( 1)$ or $I$ is generated by a (possibly empty) set of indeterminates in $S$, or
%\item for some variable $y = x_j$ of $S$, 
%$\text{in}_y I =  C_{y,I}  \cap (N_{y,I}+ (y))$ is a geometric vertex decomposition and the contractions of $N_{y,I}$ and $C_{y,I}$ to $\kappa[x_1,\dots,\widehat{y},\dots, x_m]$ are geometrically vertex decomposable.
%\end{enumerate}
%\end{defn}

\subsection{Gorenstein biliaison}
Let $S$ be a ring and $I$ an ideal of $S$. We say that $S/I$ is \textbf{generically Gorenstein}, or $\mathbf{G_0}$, if $(S/I)_P$ is Gorenstein for every minimal prime $P$ of $I$. We say that $I$ is \textbf{unmixed} if $\hgt(I) = \hgt(P)$ for every associated prime $P$ of $I$.   Recall that a Cohen--Macaulay ideal is necessarily unmixed.

 We recall the definition of Gorenstein linkage of homogeneous ideals of a polynomial ring.

\begin{defn}\label{def:GorensteinLikage}
    Let $S$ be a polynomial ring over a field, and let $I$ and $J$ be homogeneous, saturated ideals of $S$. We say that $I$ and $J$ are \textbf{directly Gorenstein linked (or G-linked)} by a homogeneous, saturated ideal $C$ if $C \subseteq I \cap J$, $S/C$ is Gorenstein, $C:I = J$, and $C:J = I$.  The equivalence relation generated by $G$-linkage
 is called Gorenstein liaison. An ideal in the Gorenstein liaison equivalence class of a complete intersection is called  {\bf glicci}. \end{defn}

%As the name suggests, if $C$ is a basic double G-link of $B$, then $B$ is G-linked to $C$ in two steps \cite[Proposition 5.10]{KMM+01}, i.e., there is an ideal $D$ so that $B$ is directly G-linked to $D$ and $D$ is directly G-linked to $C$. 

%\begin{defn}\label{def:basic double G-link} 
%Let $R$ be a polynomial ring over a field, and let $f$ be a homogeneous degree $d$ element of $R$.  Let $A\subset B$ be homogeneous, unmixed proper ideals. Then the ideal $fB+A$ is called a \textbf{basic double G-link of $B$ on $A$ of shift $d$} if  
%\begin{itemize}
%    \item $R/A$ is Cohen--Macaulay and $G_0$;
%    \item $\hgt(A) = \hgt(B)-1$; and
 %   \item $A:f=A$.
%\end{itemize}
 %  If $A$ and $B$ are monomial ideals and $f$ is a monomial, we call $f B+A$ a \textbf{monomial basic double G-link of $B$ on $A$}. 

%Abusing notation, we will sometimes refer to an equality $C = fB+A$ as a basic double G-link to mean that $C$ is a basic double G-link of $B$ on $A$.
%\end{defn}

%For a discussion of basic double G-linkage in terms of linear equivalence of divisors, see \cite[Section 4]{KMM+01}.

Geometric vertex decomposition is intimately related to elementary G-biliaison, which we review now. 
 \begin{defn}\label{def:Gbiliaison}
Let $I$ and $D$ be homogeneous, saturated, unmixed ideals of $S = \kappa[x_1, \ldots, x_m]$.  Suppose there exists a homogeneous ideal $N$ satisfying all of the following properties:
\begin{itemize}
    \item $N \subseteq I \cap D$ ;
    \item $S/N$ is Cohen--Macaulay and $G_0$;
    \item $\hgt(C) = \hgt(N)+1=\hgt(I)$;
    \item there is an isomorphism of graded $S/N$-modules $I/N \cong [D/N](-\ell)$ for some $\ell\in \ZZ$.
\end{itemize}
Then we say that $I$ is obtained from $D$ by an \textbf{elementary G-biliaison of shift $\ell$}.  
\end{defn}

As the name suggests, if $I$ is obtained from $D$ by an elementary G-biliaison, as described in \Cref{def:Gbiliaison}, then $I$ is G-linked to $D$ in two steps, i.e., there is an ideal $J$ so that $I$ is directly G-linked to $J$ and $J$ is directly G-linked to $D$ \cite[Theorem 3.5]{Har07}. See also \cite[Proposition 5.12]{KMM+01} and \cite[Proposition 5.10]{KMM+01} for prior related results.  

%\AS{What is a reference for this? Tricia, can you suggest some way to handle this so that we can say that ideals related by G-biliaison are in the same Gorenstein linkage class -- the following is the reference for basic double links}
%\PJK{KMMR+ essentially did elementary G-biliaison but had a $G_1$ hypothesis that Hartshorn was able to weaken to a $G_0$ hypothesis.  There weirdly isn't a clean citation I'm aware of for the claim that elementary G-biliaison (in Hartshorne's sense) and basic double G-link generate the same equivalence class.  That fact is relevant to people's interest in elementary G-biliaison but not relevant to proofs in our paper, so maybe we don't say anything more about that than point to the existence of prior related work.}

A main contribution of \cite{FKRS} is to relate geometric polarization to elementary G-biliaison in the manner described in the following result. Note that the hypothesis on the Gr\"obner basis in the following theorem is weaker than that of \Cref{prop:GB linear in y gives gvd} in that it does not assume the elements of $\G$ are linear in the variable $y$. This assumption is replaced by requiring that the polarization $\mathcal{P}_y(\G)$, which is by construction linear in $y$, be a Gr\"obner basis.

\begin{theorem}[{\cite[Theorem 5.16]{FKRS}}]\label{thm:polarLink}
    Let $S = \kappa[x_1, \ldots, x_m]$, and let $I$ be an unmixed and homogeneous proper ideal of $S$.  Fix $i \in [n]$, and write $y=x_i$.  Fix a term order $<$ that is $y$-compatible with respect to $I$, and suppose $I$ admits a $<$-Gr\"obner basis 
    of the form \[
\mathcal{G} = \{yd_1+r_1, \ldots, yd_k+r_k, h_1, \ldots, h_\ell\},
    \]  where $y$ does not divide any term of any $r_j$ or any $h_j$.  Suppose that $y-y'$ is a nonzerodivisor on $S[y']/(\mathcal{P}_y(\mathcal{G}))$.  Set $D = (d_1, \ldots, d_k, h_1, \ldots, h_\ell)$ and $N = (h_1, \ldots, h_\ell)$, and let $D'$ be the ideal of $S[y']$ obtained from $D$ by the substitution $y \mapsto y'$.  Then the following are equivalent:
    \begin{enumerate}
        \item There is an elementary G-biliaison $(D/N)[-1] \cong I/N$, where the isomorphism is given by multiplication by any of the $(yd_i+r_i)/d_i$ for $i\in [k]$, all of which are equivalent modulo $N$;
        \item There is an elementary G-biliaison $(D'/NS[y'])[-1] \cong (\mathcal{P}_y(\mathcal{G}))/NS[y']$, where the isomorphism is given by multiplication by some $v'/d'$ with $v' \in (\mathcal{P}_y(\mathcal{G}))$ and $d' \in D'$ nonzerodivisors on $S[y']/NS[y']$ with $\init_y(v')/d' = y$;
        \item $(\mathcal{P}_y(\mathcal{G}))$ is unmixed and admits a nondegenerate geometric vertex decomposition at $y$, the geometric link, $D$, is unmixed, and the geometric deletion, $N$, is a Cohen--Macaulay and $G_0$ ideal.
    \end{enumerate}
\end{theorem}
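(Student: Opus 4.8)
Here is how I would approach \Cref{thm:polarLink}.

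The plan is to transport the entire statement to the larger polynomial ring $S' = S[y']$ and the ideal $\widetilde I := (\mathcal{P}_y(\mathcal{G}))$, establish the equivalence of the two ``upstairs'' conditions (2) and (3) there by means of the correspondence between geometric vertex decomposition and elementary G-biliaison, and then descend to $S$ along the nonzerodivisor $y - y'$. To set up, a direct computation with \Cref{def:geoPol} gives $\mathcal{P}_y(yd_j + r_j) = y d_j' + r_j$, where $d_j'$ denotes $d_j$ with $y$ replaced by $y'$, and $\mathcal{P}_y(h_j) = h_j$ since $y$ divides no term of $h_j$; hence $\mathcal{P}_y(\mathcal{G}) = \{yd_1' + r_1, \ldots, yd_k' + r_k, h_1, \ldots, h_\ell\}$ is linear in $y$, and $y$ does not occur in $D' = (d_1', \ldots, d_k', h_1, \ldots, h_\ell)$. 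By \Cref{thm:inducedGB}, the hypothesis that $y - y'$ is a nonzerodivisor on $S'/\widetilde I$ yields that $\mathcal{P}_y(\mathcal{G})$ is a $\prec$-Gr\"obner basis of $\widetilde I$. One then verifies that $\prec$ is $y$-compatible with respect to $\widetilde I$: tracking terms through $\mathcal{P}_y$, the term of $\mathcal{P}_y(g)$ coming from $\init_<(g)$ attains the largest value of $\mathrm{depol}$ among all terms of $\mathcal{P}_y(g)$ and is divisible by $y$, so $\init_\prec(\mathcal{P}_y(g)) = \init_\prec(\init_y(\mathcal{P}_y(g)))$; this is the point where the $y$-compatibility of $<$ with respect to $I$ is used. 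With these facts in hand, \Cref{prop:GB linear in y gives gvd} applies to $\widetilde I$ with the order $\prec$, identifying the geometric link of $\widetilde I$ with $D'$ (equivalently, under the substitution $y' \mapsto y$, with $D$) and the geometric deletion with $NS'$, and giving the geometric vertex decomposition $\init_y(\widetilde I) = D' \cap (NS' + (y))$.

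For the equivalence of (2) and (3), I would apply the correspondence between nondegenerate geometric vertex decompositions and elementary G-biliaisons of \cite{KR21} directly to $\widetilde I$, which, being linear in $y$, lies squarely within that framework. In the direction $(3) \Rightarrow (2)$: assuming $\widetilde I$ is unmixed, its geometric vertex decomposition at $y$ is nondegenerate, $D'$ is unmixed, and $NS'$ is Cohen--Macaulay and $G_0$, the cited results produce the elementary G-biliaison $(D'/NS')[-1] \cong \widetilde I / NS'$, realized by multiplication by a degree-one element $v'/d'$ with $v' \in \widetilde I$, $d' \in D'$ nonzerodivisors on $S'/NS'$, and $\init_y(v')/d' = y$ --- a natural choice being $v' = yd_i' + r_i$ and $d' = d_i'$, since $d_i'$ involves no $y$. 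For $(2) \Rightarrow (3)$, the existence of such a biliaison forces $\widetilde I$ and $D'$ to be unmixed and the geometric vertex decomposition to be nondegenerate, by the same correspondence read in reverse. The work here is essentially to repackage the statements of \cite{KR21} in the precise form demanded by conditions (2) and (3).

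The equivalence of (1) and (2) I would obtain by reduction modulo $y - y'$. Because $y$ occurs in no generator of $D'$, the ring $S'/D'$ is a polynomial ring in $y$ over a ring in the remaining variables, so $y - y'$ is monic in $y$ there and hence a nonzerodivisor; likewise $y - y'$ is monic in $y'$, hence a nonzerodivisor, on $S'/NS' = (S/N)[y']$; and $y - y'$ is a nonzerodivisor on $S'/\widetilde I$ by hypothesis. Using the identifications $S'/(\widetilde I + (y - y')) \cong S/I$ --- which follows from $(\mathcal{P}_y(\mathcal{G}), y - y') = (\mathcal{G}, y - y')$ --- together with $S'/(D' + (y - y')) \cong S/D$ and $S'/(NS' + (y - y')) \cong S/N$, tensoring the graded $S'/NS'$-module isomorphism of (2) with $S'/(y - y')$ gives the graded $S/N$-module isomorphism of (1), with the multiplier $v'/d'$ descending to the corresponding $(yd_i + r_i)/d_i$; conversely, the data of (1) lifts to (2), since $\mathcal{P}_y$ amounts to adjoining the free variable $y'$ and then specializing it back to $y$. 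The unmixed, Cohen--Macaulay, and $G_0$ conditions on $\widetilde I$, $D'$, and $NS'$ correspond to those on $I$, $D$, and $N$ by the same nonzerodivisor argument, using that $I$ is unmixed and the invariance properties of geometric polarization from \cite{FKRS}.

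The conceptual core --- that a nondegenerate geometric vertex decomposition is an elementary G-biliaison --- is \cite{KR21}, so I expect that to be the easy part; the real obstacle is making that machinery genuinely apply to $\widetilde I$ and transporting its conclusions without loss. Two points demand the most care. First, verifying that $\prec$ is $y$-compatible with respect to $\widetilde I$ and that the geometric link and deletion of $\widetilde I$ are exactly $D'$ and $NS'$; this rests on a precise understanding of how initial terms behave under geometric polarization and leans crucially on the $y$-compatibility of $<$ with respect to $I$. Second, controlling the reduction to and lift from $S$ along $y - y'$ --- verifying that the multipliers remain nonzerodivisors and transform correctly under the substitution $y' \mapsto y$, and confirming that unmixedness, which can in principle be altered by cutting with a regular element, really does pass between the two sides, the partial-polarization structure of $\widetilde I$ being what makes this work.
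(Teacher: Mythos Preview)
The paper does not give its own proof of this statement: \Cref{thm:polarLink} is quoted verbatim from \cite[Theorem 5.16]{FKRS}, and the only comment the paper adds is the one-line remark that, although the original is stated for a $y$-compatible term order, the argument in \cite{FKRS} only requires $<$ to be $y$-compatible with respect to $I$. There is therefore nothing in the present paper to compare your proposal against.

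For what it is worth, your outline is the natural strategy and is consistent with how the result is used later in the paper (namely, one polarizes to land in the linear-in-$y$ setting where the \cite{KR21} correspondence applies, and then descends along the regular element $y-y'$). Your computation $\mathcal{P}_y(yd_j+r_j)=yd_j'+r_j$ and the identification of the geometric link and deletion of $(\mathcal{P}_y(\mathcal{G}))$ with $D'$ and $NS[y']$ are correct. The two places you flag as requiring care --- the $y$-compatibility of $\prec$ with respect to $\widetilde I$, and the transfer of unmixedness, Cohen--Macaulayness, and $G_0$ across $y-y'$ --- are indeed the technical content, and for a complete proof you would need to invoke the relevant lemmas from \cite{FKRS} rather than sketch them; but as a roadmap your proposal is sound.
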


Although \cite[Theorem 5.16]{FKRS} is stated for a $y$-compatible term order $<$, the proof only requires that $<$ be $y$-compatible with respect to $I$.

%\AS{Should we just keep (2) and (3) since this is the equivalence we actually use in our proof?}
%\PJK{I guess in the unlikely event that someone comes across this here and wants to use it, it's nice for them also to know what the isomorphism looks like?  But I don't feel strongly - whatever you think is best.}

\section{Main Theorem}

%{\color{blue}Mention ``distraction" of Huneke-Ulrich, etc.}\AS{Did Huneke-Ulrich use distractions? Where? I included a discussion of distractions as appear in Hartshorne in the introduction.}

%\PJK{Maybe I'm not right to say distraction here.  The theorem I had in mind that we should relate to is \cite[Theorem 4.2]{HU07}, where they also have as input an artinian ideal and then construct some polynomials along the way.  We differ in that they actually study the monomial ideal itself in the ring it naturally lives in and also in that they don't get to insist on homogeneous links.  So, same flavor but different outcomes.}

%\PJK{I no longer believe these \cite{HU07} is close enough for there to be a smooth way for us to discuss it.}

While a priori when defining multicomplex configurations in \Cref{def:multicomplex} we impose no restrictions of the members of the collection $\cF$ of forms used for substitution, to study the properties of $I(M,\cF)$ using Gorenstein liaison theory methods developed in \cite{FKRS}, particularly \cref{thm:polarLink}, we need to control the initial monomials of the elements of $\cF$ with respect to a given term order. The following condition aids in this regard.

\begin{defn}
Fix a monomial order $<$ on $S=\kappa[x_1,\ldots, x_m]$. We say a set $T$ of homogeneous elements of $S$  is {\em $x_i$-initial} if the leading monomial of each $f\in T$ is $\init_<(f)=x_i$. 
\end{defn}

For a homogeneous element $f\in S$, $\init_<(f)=x_i$ implies $f$ is a linear form.

%\PJK{I don't think we need ``If $<$ is a refinement of the degree partial order" because $f$ is already assumed homogeneous.}

%In this section we consider $c\leq n-1$ families of hyperplanes $\H_1, \ldots, \H_c$ in projective space. Each family $\H_i$ consists of $s_i$ hyperplanes defined by linear forms $\ell_{i,j}\in\kappa[x_1,\ldots, x_n]$ with $0\leq j <s_i$ such that with respect to a suitable monomial order $<$ they satisfy $\init_<(\ell_{i,j})=x_j$. We also consider a set $E$ of tuples $\ba_i=(a_{i1},\ldots, a_{ic})\in \NN^c$ that are pairwise incomparable under the partial order on $\NN^c$ given by coordinate-wise comparison.

%Two ideals are determined by the given data: a monomial ideal $J(E)=(\bx^\ba \mid \ba \in E)$ of $S=\kappa[x_1,\ldots, x_n]$, whose minimal generators are $\bx^\ba:=\prod_{j=1}^nx_j^{a_{i,j}}$ and an ideal generated by products of linear forms 
%\begin{equation}\label{eq:idealProductsLF}
%I(\H_1, \ldots, \H_c, E):=\left( \prod_{i=1}^c\prod_{j=0}^{a_{i,j}-1}\ell_{i,j}:  \ba \in E\right).
%\end{equation}

%Under mild conditions on $E$ and on the intersection of members of the each family, we show that this class of ideals is glicci.

We now arrive at our main result.  As usual, take $R =\kappa[x_1,\ldots, x_n] \subseteq \kappa[x_1,\ldots, x_m] = S$.

%\PJK{I wasn't quite sure where the right place to declare the rings was, but we do need them both named before (6).  This is one option?}

%\PJK{As written before, it was possible for $<$ not to be $x_1$-compatible.  There are a couple of ways to solve this.  One is to define ``$y$-compatible with respect to $I$" where you only evaluate $\init_<(f) = \init_<(\init_y(f))$ on a Gr\"obner basis of $I$ and then note that everything we cite for $y$-compatible term orders goes through in this slightly more general setting.  Another option is to just give this theorem for lex orders.  Currently, I've chosen the lazy version, but I'm happy to do the more general thing if you think that's better.}

%\AS{I think I like the ``$y$-compatible with respect to $I$" solution better.}

\begin{theorem}\label{thm: main}
If $\bc\in \NN^n$, $M$ is a $\bc$-multicomplex, $<$ is monomial order on $S$ refining $x_1>x_2>\cdots>x_m$, and for each $ i\in [n]$ the set $\cF_i=\{\ell_{ij}: 0\leq j\leq c_i\}$ is an $x_i$-initial set of linear forms of $S$ such that the collection of ideals

\[
\left(\ell_{1,b_1}, \ldots, \ell_{n,b_n}\right) \text{ for all } \bb\in\NN^n \text{ such that } \bx^\bb \in M 
\]

are distinct, then the multicomplex configuration ideal $I(M,\cF)$ has the following properties:
\begin{enumerate}
\item  the irredundant primary decomposition of $I(M,\cF)$ is
\[
I(M,\cF)=\bigcap\limits_{\bb\in\NN^n,\bx^\bb \in M}\left(\ell_{1,b_1}, \ldots, \ell_{n,b_n}\right),
\]
thus  the multicomplex configuration $M(\cF)$ is a union of $|M|$ linear spaces of codimension $n$, 
\item the set of generators of $I(M,\cF)$ described in \eqref{eq: multicomplex} is a Gr\"obner basis,
\item
the initial ideal of $I(M,\cF)$ is the ideal $I(M)$ of the muticomplex extended to $S$, i.e.,
\[
\init_< I(M,\cF)=I(M)S,
\]
\item $I(M,\cF)$ is Cohen-Macaulay of $\hgt I(M,\cF)=n$ and degree $\deg(S/I(M,\cF))=|M|$,
\item $I(M,\cF)$ is  glicci,
\item the Betti numbers of $I(M)$ and $I(M,\cF)$ coincide, that is, $\beta_{ij}^R(I(M))=\beta_{ij}^S(I(M,\cF))$.
\end{enumerate}
\end{theorem}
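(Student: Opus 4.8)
\textbf{Proof proposal for \Cref{thm: main}.}

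The plan is to set $y = x_n$ and run an induction on the number of variables appearing in the monomials of $M$ (equivalently, on $n$), using the geometric polarization machinery of \Cref{thm:polarLink} as the inductive engine. First I would establish the ordering of the generators of $I(M,\cF)$: since each $\ell_{ij}$ has $\init_<(\ell_{ij}) = x_i$ and $<$ refines $x_1 > \cdots > x_m$, for a minimal generator $\prod_i x_i^{a_i} \in G(I(M))$ the product $\prod_{i=1}^n \prod_{j=0}^{a_i-1}\ell_{i,j}$ has initial term $\prod_i x_i^{a_i}$, so $\init_<$ of the claimed generating set contains $I(M)S$. Granting part (2) (that this generating set is a Gröbner basis), the reverse containment and parts (1), (3), (4) follow quickly: (3) is immediate, (1) follows by decomposing $I(M)$ as an intersection of monomial primes $(x_1,\ldots, x_n)$ indexed by the standard monomials $\bx^\bb \in M$ and observing that substitution respects this intersection because the corresponding linear ideals $(\ell_{1,b_1},\ldots,\ell_{n,b_n})$ are distinct (hence the intersection on the right has the correct codimension, and one checks containment both ways using the Gröbner basis to control $I(M,\cF)$), and (4) follows since $I(M)S$ is Cohen--Macaulay of height $n$ (it is $I(M)$, Artinian in $R$, extended to $S$) with $\deg(S/I(M)S) = |M|$, and Cohen--Macaulayness, height, and degree are preserved under Gröbner degeneration.

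For parts (2), (5), (6) I would argue simultaneously by induction. The base case $n = 0$ (or $M$ using no variables) is trivial. For the inductive step, write $y = x_n$ and split $G(I(M))$ into generators divisible by $x_n$ and those not; grouping by the power $a_n$ of $x_n$, one sees $I(M,\cF)$ has a generating set whose elements, after one application of $\mathcal{P}_y$, are linear in $y$. The key observations are: (a) $M$ restricted to $x_n$-degree $0$ is again a multicomplex $M'$ in fewer variables, and the "link" data corresponds to the multicomplex $M''$ obtained by the standard link construction at $x_n$; (b) the geometric deletion $N$ is the multicomplex-configuration ideal of $M'$ in $S$, which by the inductive hypothesis is Cohen--Macaulay and (being a union of linear spaces, hence reduced, hence $G_0$) satisfies the hypotheses of \Cref{thm:polarLink}; (c) the geometric link $D$ is (up to the shift) the multicomplex-configuration ideal attached to $M''$, unmixed by induction; (d) $y - y'$ is a nonzerodivisor on $S[y']/(\mathcal{P}_y(\mathcal{G}))$ — this is where the hypothesis that the linear ideals $(\ell_{1,b_1},\ldots,\ell_{n,b_n})$ are distinct does real work, guaranteeing the polarized object stays reduced and unmixed so that the "new" variable $y'$ behaves like a genuine polarization variable. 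Feeding these into \Cref{thm:polarLink}, the equivalence of its three conditions gives both that $\mathcal{P}_y(\mathcal{G})$ is a Gröbner basis (via \Cref{thm:inducedGB}, so by specializing $y' \mapsto y$ we recover that $\mathcal{G}$ is a Gröbner basis of $I(M,\cF)$, proving (2)) and that $I(M,\cF)$ is obtained from $D$ by an elementary G-biliaison; since $D$ is itself a multicomplex-configuration ideal in fewer variables, glicci by induction, we conclude $I(M,\cF)$ is glicci, proving (5).

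For part (6), the comparison of Betti numbers, I would combine two facts. First, $\beta_{ij}^R(I(M)) = \beta_{ij}^{R^{\mathrm{pol}}}(I(M)^{\mathrm{pol}})$ is the classical statement that polarization preserves graded Betti numbers. Second, I would show $\beta_{ij}^{R^{\mathrm{pol}}}(I(M)^{\mathrm{pol}}) = \beta_{ij}^S(I(M,\cF))$: the ideal $I(M,\cF)$ is the image of $I(M)^{\mathrm{pol}}$ under $x_{i,j}\mapsto \ell_{i,j}$, and under the hypotheses the linear forms $\ell_{i,j}$ in $x_{i,j}$-position, with their prescribed distinct leading terms, form (after the same degeneration used above) a setup in which the minimal free resolution of $R^{\mathrm{pol}}/I(M)^{\mathrm{pol}}$ specializes to a minimal free resolution of $S/I(M,\cF)$ — concretely, because $\init_< I(M,\cF) = I(M)S$ has the same Betti numbers as $I(M)^{\mathrm{pol}}$ (polarization also preserves Betti numbers of the initial monomial ideal, and $I(M)S$ is just $I(M)$ extended, which has the same Betti numbers as $I(M)$), the upper-semicontinuity inequality $\beta_{ij}^S(I(M,\cF)) \le \beta_{ij}^S(\init_< I(M,\cF)) = \beta_{ij}^R(I(M))$ combines with the reverse inequality coming from the iterated G-biliaison (each elementary G-biliaison step is a basic double link on the level of resolutions and preserves Betti numbers here, since $N$ is Cohen--Macaulay of one smaller height and the biliaison has shift $-1$) to force equality.

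The main obstacle I anticipate is verifying hypothesis (d): that $y - y'$ is a nonzerodivisor on $S[y']/(\mathcal{P}_y(\mathcal{G}))$, equivalently (by \Cref{thm:inducedGB}) that $\mathcal{P}_y(\mathcal{G})$ is a Gröbner basis. This is the linchpin that makes the induction close, and it is precisely where the distinctness hypothesis on the linear ideals must be converted into a statement about the associated primes (or the reducedness/unmixedness) of the partially polarized ideal. I expect the cleanest route is to prove directly, by a parallel induction, that $(\mathcal{P}_y(\mathcal{G}))$ is radical with primary decomposition indexed by the polarized multicomplex $M^{\mathrm{pol}}$ restricted appropriately — with the distinctness hypothesis ensuring no two components collide — and then invoke that a proper ideal all of whose associated primes avoid $(y - y')$ admits $y - y'$ as a nonzerodivisor; bootstrapping this with \Cref{prop:GB linear in y gives gvd} and \Cref{thm:inducedGB} then yields the Gröbner basis claim and feeds \Cref{thm:polarLink}.
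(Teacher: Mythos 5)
Your overall framework for part (5)---geometric polarization, \Cref{thm:polarLink}, induction, with the link and deletion ideals being smaller multicomplex configurations---is the same as the paper's, and your identification of the nonzerodivisor hypothesis as the linchpin is exactly right. But the order in which you propose to resolve things inverts the paper's logic and, as you yourself flag, leaves a genuine gap.

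You plan to establish part (2) (the Gröbner basis claim) \emph{as part of} the inductive GVD argument, by first proving that $y-y'$ is a nonzerodivisor on $S[y']/(\mathcal{P}_y(\mathcal{G}))$ via a ``parallel induction'' showing $(\mathcal{P}_y(\mathcal{G}))$ is radical with a prescribed primary decomposition, and then feeding that into \Cref{thm:inducedGB}. This is circular in practice: to pin down the primary decomposition of $(\mathcal{P}_y(\mathcal{G}))$ without already knowing its initial ideal or a Gröbner basis is essentially the same difficulty you are trying to resolve, and the sketch you give for closing this loop is not an argument. The paper sidesteps this entirely by proving (1)--(4) \emph{first, without any induction and without any GVD}, by a pure height/degree comparison: it shows $I(M)S \subseteq \init_< I(M,\cF)$ on one side and $I(M,\cF) \subseteq J := \bigcap (\ell_{1,b_1},\ldots,\ell_{n,b_n})$ on the other, computes $\hgt = n$ and $\deg(S/J) = |M| = \deg(S/I(M)S)$ using the distinctness hypothesis and the associativity formula, and then squeezes to force $\init_< I(M,\cF) = I(M)S$ (this is (3), hence (2)), Cohen--Macaulayness (4), and $I(M,\cF)=J$ (1). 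With (2) in hand the paper applies (2) to the partially polarized configuration, which gives that $\mathcal{P}_y(\mathcal{G})$ is a Gröbner basis, and then uses \Cref{thm:inducedGB} in the \emph{opposite} direction from you: Gröbner basis $\Rightarrow$ nonzerodivisor. That is the missing idea in your write-up.

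Two smaller issues. First, you propose induction ``on the number of variables appearing in $M$ (equivalently, on $n$),'' but the link multicomplex $M''$ (with $I(M'') = I(M):y$) need not lose any variables, so an induction on $n$ alone does not close; the paper runs a double induction on $n$ and $|M|$, using that $|M''| < |M|$ and $|M'| < |M|$. Second, the paper does the GVD at $x_1$ (the largest variable in the order refining $x_1 > \cdots > x_m$), not $x_n$; this matters for cleanly reducing the deletion $M'$ to a configuration in the subring $\kappa[x_2,\ldots,x_m]$, since an $x_i$-initial linear form for $i\geq 2$ cannot involve $x_1$. For part (6), the paper simply cites \cite[Corollary 2.20]{BCR} and \cite[Proposition 5.10]{FKRS}; your proposed upper-semicontinuity-plus-biliaison argument is considerably more elaborate and, as stated, the ``reverse inequality'' step is not justified.
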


\begin{proof}
Recall that $I(M)$ is the ideal of $R$ generated by monomials not in $M$. Let $E$ be the set of exponents of the minimal monomial generators of $I(M)$.

For $\ba\in E$, denote
$f(\ba)=\prod_{i=1}^n\prod_{j=0}^{a_{i}-1}\ell_{i,j}$.

Then one can write the definition of the multicomplex configuration ideal in \eqref{eq: multicomplex} as
\begin{equation}\label{eq: linear multicomplex}
I(M,\cF)=
\left( f(\ba): \ba\in E \right).
\end{equation}
 We will now make a series of observations and small computations, (a)-(e), which we will collect to prove items (1)-(6) of the present theorem.

%\PJK{I inserted the above to avoid error of associating (a) with (1), etc. and then getting confused. And then some other blazes on the trail later down.}
 
    \begin{enumerate}
\item[(a)] \[
I(M)S\subseteq \init_< I(M,\cF).
\]
Indeed, for any minimal generator $\bx^{\ba}$ of $I(M)S$, using the hypothesis that the sets $\cF_i$ are $x_i$-initial, one deduces $\bx^{\ba} = \prod_{i=1}^n\prod_{j=0}^{a_{i}-1}x_i =\prod_{i=1}^n\prod_{j=0}^{a_{i}-1}\init(\ell_{i,j})=\init(f(\ba)) \in \init_< I(M,\cF)$.

\item[(b)] 
\begin{equation}\label{eq: J}
I(M,\cF)\subseteq J:=\bigcap\limits_{\bb\in\NN^n,\bx^\bb \in M}\left(\ell_{1,b_1}, \ldots, \ell_{n,b_n}\right).
\end{equation}

Indeed, the condition $\bx^\bb \in M$ is satisfied if and only if for each $\ba\in E$ there exists an index $j$ such that $0\leq b_j<a_{j}$,  yielding that $\ell_{i,b_j}\mid f(\ba)$ and thus that each generator $f(\ba)$ of $I(M,\cF)$ described in \eqref{eq: linear multicomplex} belongs to the ideal $J$.

\item[(c)] \[\hgt(I(M,\cF))=n\]
Recall that $R/I(M)$ is Artinian, which is equivalent to $\hgt(I(M))=n$. From item (a) one has $n=\hgt(I(M))\leq \hgt(\init_< I(M,\cF))=\hgt( I(M,\cF))$. From item (b) it follows that $\hgt( I(M,\cF))\leq \hgt(J)=n$.  The two inequalities yield the desired equality.

\item[(d)] \[
\deg(S/J)=\deg(R/I(M)).
\] According to the hypothesis, the ideals in the intersection \eqref{eq: J} are distinct. Each of these ideals is a complete intersection of height $n$ as it is generated by a Gr\"obner basis consisting of linear forms and in particular the listed generators form a linearly independent set. By the associativity formula, it follows that the degree of $J$ is equal to the number of ideals in the intersection, which is the cardinality of the multicomplex $M$. Since the cardinality of the multicomplex $M$ is also equal to $\deg(R/I(M))$, we have shown that $\deg(S/J)=\deg(R/I(M))$.

\item[(e)]
\begin{eqnarray}\label{eq: inequalities}
    \deg(S/I(M)S) &\geq& \deg(S/\init_< I(M,\cF))=\deg(S/ I(M,\cF)) \nonumber \\
    &\geq & \deg(S/J)=\deg(R/I(M))=\deg(S/I(M)S)
\end{eqnarray}
The inequalities in the above chain follow from items (a) and (b), respectively, observing that the ideals whose degrees are being compared have the same height. The first equality follows from Gr\"obner theory, the second is item (d), and the third is standard.

%\PJK{(d) used to be part of (b).  I hope this shuffling is okay.}

%follows from the equality $\deg(S/J)=|M|$ obtained in (b) and the fact that the degree of $R/I(M)$ is equal to the length of this Artinian ring, which is equal in turn to the cardinality of $M$.

We are now prepared to draw the desired conclusions from items (a)-(e).

\emph{Proof of (1)-(4):} An important consequence of the string of inequalities \eqref{eq: inequalities} is that all must be equalities. Since $I(M)S$ is an unmixed ideal, $I(M)S\subseteq \init_< I(M,\cF)$ and these two ideals have the same height and same degree, it follows as claimed in (3) that $I(M)=\init_< I(M,\cF)$; see \cite[Lemma 8]{Engheta}. Consequently the set $\G=\{f(\ba): \ba\in E\}$ forms a Gr\"obner basis for $I(M,\cF)$, settling (2).  Since its initial ideal is Cohen-Macaulay, we deduce that $I(M,\cF)$ is Cohen-Macaulay \cite[Proposition 25.4]{Peeva}, proving item (4). From the equality of their multiplicities in \eqref{eq: inequalities} it now follows by the same argument as above that $I(M,\cF)=J$, settling (1).

We now exhibit a geometric vertex decomposition for $I(M,\cF)$ to establish (5).

    The proof proceeds by double induction on $n$ and for  each fixed $n$ on the cardinality of $M$. If $n=1$, then the set $E=\{e\}$ consists of only one element  and the ideal $I(M,\cF)=\left( \prod_{j=1}^{e} \ell_{1,j}\right)$ is principal. 
      Assume that $n>1$.  The smallest multicomplex is $M=\{1\}$ with $I(M)=(x_1,\ldots,x_n)$ and $I(M,\cF)$ an ideal generated by $c$ linearly independent linear forms, which is a complete intersection.  

     % {\color{blue}
      If $x_1\in I(M)$ then $M$ is a multicomplex supported on $x_2, \ldots, x_n$.  Hence $I(M, \mathcal{F}) = I(M, \mathcal{F} \setminus \{\ell_{0,1}\})+(\ell_{0,1})$, where $\ell_{0,1}$ is a linear form in a variable not involved in $I(M, \mathcal{F} \setminus \{\ell_{0,1}\})$. %\AS{What is $\ell_{0,1}$ here?} \PJK{Thanks! Should have been $\ell_{0,1}$.} 
      By induction on $n$, $I(M, \mathcal{F} \setminus \{\ell_{0,1}\})$ is glicci via links within $\kappa[x_2, \ldots, x_m]$.  If $G = I(M, \mathcal{F} \setminus \{\ell_{0,1}\}):J$ is a direct G-link of ideals of $\kappa[x_2, \ldots, x_m]$, then $G+(\ell_{0,1}) = I(M, \mathcal{F}):(J+(\ell_{0,1}))$ is a direct G-link in $S$.  Hence, $I(M, \mathcal{F})$ is also glicci.

    %  I don't know if I've said too much or too little.  It does not currently feel clean, though.}

 %     If $x_1\in I(M)$ then $M$ is a multicomplex supported on $x_2, \ldots, x_n$, meaning that $M\subseteq \overline{R}=\kappa[x_2, \ldots, x_n]$ and $S/I(M,\cF)\cong \overline{S}/\overline{I}$, where $\overline{S}=S/(\ell_{1,0})\cong\kappa[x_2,\ldots, x_{m}]$ and $\overline{I}=I(M,\cF\setminus\{\ell_{1,0}\})$ is the ideal of the matroid complex of $M$ in $\overline{S}$.
%By the inductive hypothesis on $n$, $\overline{I}$ is glicci, thus so is $I(M,\cF)$.

%{\color{blue}PJK: If $\ell_{1,0} \in I$, then perform a (degenerate) gvd at $x_1$ and note that the link is $(1)$ and the deletion is gvd by induction.}
%{\color{blue}PJK: We can consider pointing out that this would also follow from Juan and Uwe's CM+$G_0$+contains a linear form => glicci.}

Assume from now on that $n>1$ and $x_1\not \in I(M)$. Note, using (2), that $<$ is $x_1$-compatible with respect to $I$.
%with respect to the given  monomial order  $y$ is the largest among $x_1,\ldots, x_c$ so that   
Let $x_1'$ be a new variable.  For $j\geq 1$ set $\ell'_{1,j}\in S[x_1']$ to be obtained from $\ell_{1,j}$ by replacing $x_1$ by $x_1'$. Recall from (2) that 
\[
\G=\{f(\ba) :\ba\in E\}=\left \{\prod_{i=1}^n\prod_{j=0}^{a_{i}-1}\ell_{i,j}:  \ba\in E \right \}
\]
is a Gr\"obner basis for $I(M,\cF)$. 
Its one-step geometric polarization with respect to $x_1$ (\cref{def:geoPol}) is the set
\[ \P_{x_1}(\G)=\left \{\ell_{1,0}^{\min\{a_{1},1\}} \prod_{j=1}^{a_{1}-1}\ell'_{1,j}\prod_{i=2}^n\prod_{j=0}^{a_{i}-1}\ell_{i,j} :\ba\in E \right \}.
\]
Set $\cT_1=\{\ell_{1,0}\}, \cT'_1=\{\ell'_{1,1}, \ldots, \ell'_{1,c_1}\}$, $\cT_i=\cF_i$ for $i\geq 2$, and $\cT = \cT_1' \cup \bigcup_{1 \leq i \leq n} \cT_i$. By (2), $\P_{x_1}(\G)$ is a Gr\"obner basis for the ideal $I(M, \cT)\subseteq S[x_1']$ it generates with respect to the monomial order $\prec$ in \Cref{not:indOrder}.  Thus by \cref{thm:inducedGB} we conclude that $x_1-x_1'$ is regular on $S[x_1']/(\P_{x_1}(\G))$.

Set 
\begin{eqnarray*} N&=&\left(\prod_{i=2}^n\prod_{j=0}^{a_{i}-1}\ell_{i,j} :\ba\in E, a_{1}=0\right),  \\
D&=&\left(\prod_{j=1}^{a_{1}-1}\ell'_{1,j}\prod_{i=2}^n\prod_{j=0}^{a_{i}-1}\ell_{i,j} :\ba\in E, a_{1}>0\right)+N,  
\text{ and }\\
\cF' &=& \cF_2 \cup \cdots \cup \cF_n,\\
\cF^\ast_1 &=&\{\ell'_{1,1}, \ldots, \ell'_{1,c_1}\},\\
\cF'' &=& \cF^\ast_1 \cup \cF_2  \cdots \cup \cF_n.
\end{eqnarray*}
Let $M'=\{u\in M: x_1\nmid u\}$.  Because $M'$ is supported on $\{x_2, \ldots, x_n\}$ and the linear forms in $\cF'$ do not involve $x_1$, we may construct $I(M',\cF')$ as an ideal of $\kappa[x_2, \ldots, x_n]$.  Then $N=I(M',\cF')S[x_1']$.    %By (4), we know that $I(M',\cF')$ is Cohen-Macaulay of height $n$.  Hence, $N$ is Cohen-Macaulay of height $n-1$. Because $I(M',\cF')$ is radical by (1), so too is $N$; hence $N$ is $G_0$.

Let $M''$ be the multicomplex corresponding to the ideal $I(M'')=I(M):x_1$.  Observe that $D$ is obtained from $I(M'',\cF'')$ by replacing every $x_1$ with a $x_1'$.  Because $D$ does not involve $x_1$ and $I(M'',\cF'')$ does not involve $x_1'$, this is simply a matter of renaming a variable.  Since $I(M'')=I(M):x_1$ is a proper subset of $I(M)$, the multicomplex $M''$ has cardinality strictly smaller than $M$, thus we may apply the inductive hypothesis to $I(M'',\cF'')$ and apply the conclusions also to $D$. %is Cohen--Macaulay, hence unmixed, of height $n$.
%cardinality and respectively support strictly smaller than $M$. 
%(By the support of $M'$ we mean the set $x_2, \ldots, x_n$ given that $M'\subseteq \kappa[x_2, \ldots, x_n]$.)  According to the inductive hypothesis $N$ and $D$ are both Cohen-Macaulay of heights $\hgt(D)=n=\hgt(N)+1$. Moreover $N$ is $G_0$ because the  the primary components of $N$ are ideals  generated by $n-1$ linear forms by (1). 

 By \Cref{prop:GB linear in y gives gvd} we see that, since $\P_{x_1}(\G)$ is a Gr\"obner basis whose elements are at most linear in $x_1$, $(\P_{x_1}(\G))$  admits a geometric vertex decomposition with respect to $x_1$. Moreover, the geometric link is $D$, which is unmixed by (4), and the geometric deletion is $N$, which is Cohen-Macaulay and $G_0$ by  (1) and (4). \cref{thm:polarLink} now allows to conclude that  $I(M,\cF)$ and $D$ are linked via an elementary $G$-biliaison.

This settles claim (5), as $D$ is glicci in view of the inductive hypothesis.

Item (6) follows either from \cite[Corollary 2.20]{BCR} or from \cite[Proposition 5.10]{FKRS}. Although this item is not new, we include it here for completeness and to illustrate another analogy with the theory of matroid configurations; see \cite[Theorem 3.3]{MatroidConfigurations}. \qedhere
    \end{enumerate}
    \end{proof}

In order to assist in parsing the notation of \Cref{thm: main}, we provide an example, which is a continuation of \Cref{ex-not-lambda-or-matroid}.

    \begin{example}
Recall the $(2,1)$-multicomplex $M=\{1, x_1, x_1^2, x_2\}$ with corresponding ideal $I(M)= (x_1^3, x_1x_2, x_2^2)\subseteq k[x_1,x_2]$ and multicomplex configuration utilizing the linear forms $\cF_1=\{x_1, x_1+x_2+3x_3, x_1+x_3\}$ and $\cF_2=\{x_2+x_3,x_2\}$ in $S=\kappa[x_1,x_2, x_3]$, so that
\[
I(M,\cF)=(x_1(x_1+x_2+3x_3)(x_1+x_3), x_1(x_2+x_3),(x_2+x_3)x_2).
\]

In this case \begin{align*}
\mathcal{P}_{x_1}(\mathcal{G}) &= (x_1(x_1'+x_2+3x_3)(x_1'+x_3), x_1(x_2+x_3),(x_2+x_3)x_2), \\
N &= ((x_2+x_3)x_2),\\
D &= ((x_1'+x_2+3x_3)(x_1'+x_3), x_2+x_3,(x_2+x_3)x_2) = ((x_1'+x_2+3x_3)(x_1'+x_3), x_2+x_3),\\
\mathcal{F}' &= \mathcal{F}_2,\\
\mathcal{F}_1^\ast &= \{x_1'+x_2+3x_3, x_1'+x_3\},\\
\mathcal{F}'' &= \mathcal{F}_1^\ast \cup \mathcal{F}_2,\\
M' &= \{1,x_2\} \mbox{ and } I(M'), \mbox{ which we generate in $\kappa[x_2]$, is equal to } (x_2^2), \\
M'' &= \{1,x_1\} \mbox{ and } I(M'') = (x_1^2, x_2).
\end{align*} 
The reader may verify that $I(M',\mathcal{F}')S[x_1'] = N$ and that $D$ is obtained from $I(M'',\mathcal{F}'')$ by replacing each $x_1$ with at $x_1'$.
    \end{example}

  %  \AS{Should we say something about an alternate proof of part (5) by using basic double linkage to link the ideal $I(M,\cF)$ to a c.i. more straight forwardly by writing
%\[
%I(M,\cF)=\ell_{1,0}I(M',\cF_1\setminus\{\ell_{1,1}\}, \ldots, \cF_n)+I(M'',\cF_2, \ldots, \cF_n)
%\]
%with $M',M''$ as defined in the proof above or ignore this altogether? This would parallel the proof of \cite[Proposition 2.3]{MatroidConfigurations}, but with a different set of hypotheses.}

%%%%%%%%%%%%%%%%%%%%%%%%%%%%%%%%%%%%%%%%%

\bibliographystyle{amsalpha}
\bibliography{refs}

\end{document}